\newcommand{\aut}{{\rm Aut}}
\newcommand{\autpol}{{\rm Aut_\K(\pol)}}
\newcommand{\jon}{{\rm J(\pol)}}
\newcommand{\deig}{\rm {\mathbb e}}
\renewcommand{\P}{{\mathbb P}}
\newcommand{\A}{{\mathbb A}}
\newcommand{\C}{{\mathbb C}}
\newcommand{\F}{{\mathbb F}}
\newcommand{\K}{{\Bbbk}}
\newcommand{\N}{{\mathbb N}}
\newcommand{\Q}{{\mathbb Q}}
\newcommand{\Z}{{\mathbb Z}}
\newcommand{\pol}{{\K[x,y]}}
\newcommand{\derpol}{{\rm Der}_\K(\pol)}
\newtheorem{thm}{Theorem}[section]
\newtheorem{pro}[thm]{Proposition}
\newtheorem{lem}[thm]{Lemma}
\theoremstyle{remark}
\newtheorem{rem}[thm]{Remark}
\newtheorem{exa}[thm]{Example}
\begin{document}

\title[Dimension two polynomial locally nilpotent derivations]{A characterization of local nilpotence for dimension two polynomial derivations}
%\author{Iv\'an Pan}
%\today
% \keywords{foliations, surface singularities}
% \subjclass[2010]{37F75, 32S25, 32S65}
\maketitle
\begin{center}
  % {\sc Rene Baltazar}\footnote{Research of R. Baltazar  was partially supported by FAPERGS, of Brasil.} and
  {\sc Ivan Pan}\footnote{Research of I. Pan was partially supported by ANII and PEDECIBA, of Uruguay.}
\end{center}

\begin{abstract}
Let $\K$ be an algebraically closed field. We prove that a polynomial $\K$-derivation $D$ in two variables is locally nilpotent if and only if the subgroup of polynomial $\K$-automorphisms which commute with $D$ admits elements whose degree is arbitrary big.   
\end{abstract}
\section{introduction}\label{sec_intro}
Let $\K$ be a field of characteristic 0. A well known result of Rentschler (see \cite{Re}) says that a polynomial derivation $D:\pol\to\pol$ (over $\K$) is locally nilpotent, i.e. for any polynomial $f$ one has $D^nf=0$ for some $n\geq 1$, if and only if $D$ is conjugate to a derivation of the form $u(x)\partial_y$, by means of a suitable polynomial automorphism $\varphi\in\autpol$. If $\aut(D)$ denotes the isotropy subgroup of $D$ with respect to the natural conjugation action of $\autpol$ on the set $\derpol$ of $\K$ derivations, we deduce that $D$ being locally nilpotent implies $\aut(D)$ is conjugate to a subgroup of the form  
\[J_u=\{(\alpha x+\beta,\gamma y+P(x); \alpha,\gamma\in\K^*,\beta\in\K, P\in\K[x], u(\alpha x+\beta)/u(x)=\gamma\},\]
for some $u\in\K[x]$; here we write $\varphi=(f,g)\in\autpol$ to mean $\varphi(x)=f$ and $\varphi(y)=g$.

On the other hand, we consider on $\autpol$ the so-called \emph{inductive topology} defined by the filtration $\A_1\subset \A_2\subset\cdots\subset \A_d\subset\cdots$, where $\A_d=\{(f,g); \deg f,\deg g\leq d\}$. If $\K$ is algebraically closed, we see that $\A_i$ is an affine variety for any $i$ and the subgroup above is an \emph{infinite dimension algebraic group} in the sense of \cite{Sha} or \cite{Ka}. Following this last reference, we conclude that $\aut(u(x)\partial_y)$ doesn't satisfy the property of acting algebraically on $\pol$ as an (usual) algebraic group. Indeed, as shown there for a subgroup of $\autpol$ that property is equivalent to being closed (which $\aut(D)$ does for any derivation $D$: see \cite[Cor. 2.2]{BP2}) and having bounded degree.   

The aim of the present note is to prove that that holds true exclusively for locally nilpotent derivations. More precisely, we have the following characterization of the local nilpotence property whose proof relies strongly on the results of \cite{BP2}. 

\begin{thm}\label{thm1}
  Let $D$ be a nonzero derivation of $\K[x,y]$ where $\K$ is a field algebraically closed of characteristic 0. Then the following assertions are equivalent:

  $(a)$ $D$ is locally nilpotent;

  $(b)$ for every $d$ there exists $(f,g)\in\aut(D)$ such that $\deg f\geq d$ or $\deg g\geq d$; 

  $(c)$ $\aut(D)$ is not an algebraic subgroup of $\aut(\K[x,y])$;

  $(d)$ $\aut(D)$ is an infinite dimensional algebraic subgroup of $\aut(\K[x,y])$. 
\end{thm}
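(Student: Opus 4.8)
The plan is to prove the four assertions equivalent by running the cycle $(a)\Rightarrow(d)\Rightarrow(c)\Rightarrow(b)\Rightarrow(a)$, with the last implication carrying essentially all of the content.

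For $(a)\Rightarrow(d)$ I would invoke Rentschler's theorem: $D$ is conjugate, through some $\varphi\in\autpol$, to a derivation $u(x)\partial_y$, so that $\aut(D)$ is a conjugate of $J_u=\aut(u(x)\partial_y)$. As recalled in the introduction, $J_u$ is an infinite dimensional algebraic subgroup of $\autpol$: each slice $J_u\cap\A_d$ is cut out in the affine variety $\A_d$ by polynomial equations, and since $J_u$ contains all $(x,y+P(x))$ with $\deg P\le d$ the dimensions $\dim(J_u\cap\A_d)$ are unbounded. Conjugation by the fixed $\varphi$ being an automorphism of the ind-group $\autpol$, it carries this infinite dimensional algebraic subgroup onto $\aut(D)$, giving $(d)$. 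The implication $(d)\Rightarrow(c)$ is then immediate: an infinite dimensional algebraic group has subvarieties of unbounded dimension, hence cannot be a finite dimensional algebraic subgroup of $\autpol$.

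For $(c)\Rightarrow(b)$ I would use that $\aut(D)$ is closed in the inductive topology by \cite[Cor. 2.2]{BP2}. Were $(b)$ false, the degrees in $\aut(D)$ would be bounded by some $d$, so $\aut(D)=\aut(D)\cap\A_d$ would be a closed subgroup of $\autpol$ of bounded degree; by \cite{Ka} it would then act algebraically on $\K[x,y]$, i.e.\ be an algebraic subgroup of $\autpol$, against $(c)$.

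The heart of the proof is $(b)\Rightarrow(a)$, which I would prove by contraposition: if $D$ is not locally nilpotent, the degrees of the elements of $\aut(D)$ are bounded. Here I would rely on the structural analysis of $\aut(D)$ in \cite{BP2}. The guiding idea is that the sole mechanism producing commuting automorphisms of unbounded degree is the locally nilpotent one: when $D$ is conjugate to $u(x)\partial_y$ one has $\exp(h(x)D)=(x,\,y+h(x)u(x))\in\aut(D)$ for every $h\in\ker D=\K[x]$, of degree at least $\deg h$; whereas for a non locally nilpotent $D$ the results of \cite{BP2} pin $\aut(D)$ down by finitely many algebraic conditions extracted from the geometry of $D$ (the at most one dimensional ring of first integrals $\ker D$ with its fibration, the finitely many invariant curves, the zero locus of $D$), so that $\aut(D)$ is, up to conjugacy, an algebraic subgroup of the affine group or of the de Jonquières group — in fact, up to conjugacy and finite extension, a subgroup of $\gl_2$ or a torus — and in particular of bounded degree. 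This contradicts $(b)$ and closes the cycle.

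I expect the genuine difficulty to lie entirely in this last step: excluding, uniformly over all derivations $D$ that are not locally nilpotent, every indirect source of commuting automorphisms of arbitrarily large degree. Concretely this amounts to controlling the centraliser of $D$ according to how $D$ behaves along the generic fibre of $\ker D$ — a fixed point, a nontrivial multiplier, or an ``irrational'' action — and it is for this case analysis that \cite{BP2} is used in an essential way.
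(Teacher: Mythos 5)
Your reduction of the theorem to the single implication $(b)\Rightarrow(a)$ matches the paper's strategy exactly: $(a)\Rightarrow(d)$ via Rentschler's theorem and conjugation, $(d)\Rightarrow(c)$ trivially, and $(c)\Leftrightarrow(b)$ via the closedness of $\aut(D)$ (\cite[Cor. 2.2]{BP2}) combined with Kambayashi's criterion \cite{Ka}. That part is fine and is essentially what the introduction of the paper says.

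The gap is that for $(b)\Rightarrow(a)$ you have written a statement of intent rather than an argument. You assert that when $D$ is not locally nilpotent, ``the results of \cite{BP2} pin $\aut(D)$ down \dots\ so that $\aut(D)$ is \dots\ of bounded degree''; but this boundedness is precisely the new content of the theorem, not a result of \cite{BP2}, which only classifies the $D$-stable curves and the ambient groups into which $\aut(D)$ embeds. The paper has to do genuine work, organized by the number $\deig(D)$ of $D$-stable reduced principal ideals of height one. When $0<\deig(D)<\infty$, one rectifies an eigenvector, writes $D=x^\ell a\,\partial_x+b\,\partial_y$, passes to the finite-index kernel $K$ of the permutation action on stable ideals, and exploits the functional equations satisfied by $(\alpha x,\gamma y+P(x))\in K$ with $\deg P_n\to\infty$ to force $a$ independent of $y$, $b$ affine in $y$, and finally $\alpha=1$ --- a contradiction unless $a=0$, i.e.\ unless $D=b(x)\partial_y$ is locally nilpotent. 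When $\deig(D)=\infty$ and $\ker D=\K$, one must rule out the Hirzebruch-surface compactifications (whose automorphism groups are algebraic) to land in the de Jonqui\`eres group and conclude $D=b(x,y)\partial_y$. Most importantly, the case $\deig(D)=0$ --- no invariant algebraic curve at all --- is absent from your sketch (your phrase ``the finitely many invariant curves'' tacitly assumes there is at least one); there the paper proves $\aut(D)$ is \emph{finite}, which requires the theorem of Cousin--Mendes--Pan \cite{CMP} on foliations attached to simple derivations over $\C$, transported to an arbitrary field of characteristic zero by a Lefschetz-principle argument in the style of \cite{DK}. None of these three arguments, nor the case division itself, appears in your proposal, so the implication that by your own account carries essentially all of the content is missing.
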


Note that whereas $(b)$ and $(c)$ are equivalent and both are a consequence of $(d)$ the converse is not, \emph{a priori}, necessarily true because $\Z$ is not an algebraic subgroup of $\K$. Moreover, since conjugating is a homeomorphism which respects the degree's boundedness, then from Rentchler's result we deduce that $(a)$ implies $(d)$. Thus we only need to prove that $(b)$ implies $(a)$.

Notice also that Theorem \ref{thm1} says that $\aut(D)$ is an algebraic group when $D$ is not locally nilpotent and then one may ask what kind of such a group corresponds to the conjugation class of a non locally nilpotent derivation.

Finally, we observe that Theorem \ref{thm1} doesn't hold true for dimension 3 or higher (Example \ref{exa1}) even though assertion $(b)$ does for any locally nilpotent derivation $D$ (\cite[Remark 3]{BV}).

\begin{rem}
What we observe here about $\aut(D)$ recalls what happens with the isotropy of another action of $\autpol$. Indeed, it makes one think of the natural action of that group on the set of reduced principal ideals of height 1 (i.e. algebraic plane curves) where the isotropy of such an ideal $(f)$ is not an algebraic group if and only if its generator $f\in\pol$ may be transformed into an element in $\K[x]$ by means of an automorphism (see \cite{BlSt} and references therein). In other words, the ideal generated by such an $f$ would correspond in our research to a locally nilpotent derivation. Moreover, when $n>2$ the phenomena observed in Example \ref{exa1} matches the one described in \cite[Ex. 3.16]{BlSt}.
  \end{rem}
\section{The proof}

  We denote $\deig(D)$ the number of $D$-stable reduced principal ideals of height 1. If $(h)$ is such a principal $D$-stable ideal, then $D(h)=\lambda h$ for some $\lambda\in\pol$ and we say $h$ is a \emph{eigenvector} of $D$  and $\lambda$ is its (corresponding) \emph{eigenvalue}.  In the case where $h$ is an eigenvector which is reduced, i.e. square-free, we will also say that $D$ stabilizes the curve of equation $h=0$. Notice that $\deig(D)=0$ implies that the \emph{kernel} $\ker D$ of $D$ is equal to $\K$.

  Two elements $f_1,f_2\in\pol$ are said to be \emph{equivalent} if there is $\varphi\in\autpol$ such that $\varphi(f_1)=f_2$. In the case where $f_1$ is equivalent to $x$ we say it is \emph{rectifiable}.
  
 We keep all notations introduced in the precedent section. If $\varphi=(f,g)\in\autpol$, we denote $\deg\varphi$ the greatest degree of $f$ and $g$ and call it the \emph{degree} of $\varphi$; notice that we have a degree function $\deg:\autpol\to\N$ which verifies $\deg \varphi\psi\leq\deg\varphi\deg\psi$.

\begin{lem}\label{lem1}
  Let $D\in\derpol$ be a nonzero derivation and assume $\aut(D)$ to be a non-algebraic group. Then one of the following assertions holds

  $(a)$ $0<\deig(D)<\infty$, all irreducible eigenvector is rectifiable and $D$ is conjugate to a derivation of the form $a\partial_x+b\partial_y$ where either $ab\neq 0$ and $x\in\pol$ divides $a$ or $a=0$ and $b\in\K[x]$.

  $(b)$ $\deig(D)=\infty$ and $D$ is conjugate to a derivation of the form $b(x)\partial_y$.

  $(c)$ $\deig(D)=\infty$, $\ker(D)=\K$ and $D$ stabilizes the members of a pencil of rational curves.

  $(d)$ $\deig(D)=0$. 
\end{lem}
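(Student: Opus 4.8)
The plan is to analyse the action of $\aut(D)$ on the set of $D$-stable curves and then to feed the resulting information into the structure results of \cite{BP2} (see also \cite{BlSt}). Throughout we use that $\aut(D)$ is closed in $\autpol$ (\cite[Cor.~2.2]{BP2}), so by \cite{Ka} the hypothesis that $\aut(D)$ is non-algebraic means precisely that it has \emph{unbounded degree}, i.e.\ $\sup\{\deg\varphi:\varphi\in\aut(D)\}=\infty$. We shall use three elementary facts: (i) every irreducible factor of a reduced eigenvector is again an eigenvector, and every square-free product of irreducible eigenvectors is an eigenvector, so $\deig(D)<\infty$ exactly when $D$ has finitely many irreducible eigenvectors; (ii) if $\varphi\in\aut(D)$ and $D(h)=\lambda h$, then $D(\varphi(h))=\varphi(\lambda)\varphi(h)$, so $\aut(D)$ permutes the principal ideals generated by irreducible eigenvectors, and it preserves $\ker D$; (iii) since $\deg(\varphi\psi)\le\deg\varphi\deg\psi$, every finite-index subgroup of $\aut(D)$ still has unbounded degree.

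If $\deig(D)=0$ we are in case $(d)$, so assume $\deig(D)>0$ and suppose first $\deig(D)<\infty$. By (ii) the action of $\aut(D)$ on the finite, nonempty set of ideals generated by irreducible eigenvectors has a finite-index kernel $G_0$, which by (iii) has unbounded degree; and for a fixed irreducible eigenvector $h$, every $\varphi\in G_0$ fixes the ideal $(h)$, hence $\varphi(h)=c_\varphi h$ with $c_\varphi\in\K^{*}$. Thus $G_0$ is a subgroup of $\autpol$ of unbounded degree preserving the irreducible curve $\{h=0\}$; by \cite{BP2} (see also \cite{BlSt}) this forces $h$ to be rectifiable, and the same applies to each of the finitely many irreducible eigenvectors. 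Conjugating so that $x$ becomes an eigenvector, we get $x\mid D(x)$, i.e.\ $D=a\partial_x+b\partial_y$ with $x\mid a$; since $a=0$ would make every $\{x=c\}$ a $D$-stable curve and $b=0$ would make every $\{y=c\}$ a $D$-stable curve, either of which contradicts $\deig(D)<\infty$, one checks that $D$ has the form asserted in $(a)$.

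Suppose now $\deig(D)=\infty$, and distinguish cases by $\ker D$. If $\ker D\neq\K$, then $\ker D=\K[p]$ for some $p\notin\K$; since every $\varphi\in\aut(D)$ preserves $\K[p]$, it preserves the fibration defined by $p$, and because $\aut(D)$ has unbounded degree, \cite{BP2} (see also \cite{BlSt}) forces $p$ to be rectifiable. Conjugating $p$ to $x$ gives $\ker D=\K[x]$ and $D=c(x,y)\partial_y$ with $c\neq0$, whence
\[
\aut(D)=\{\,(\alpha x+\beta,\gamma y+P(x))\ :\ \alpha,\gamma\in\K^{*},\ c(\alpha x+\beta,\gamma y+P(x))=\gamma\,c(x,y)\,\}.
\]
If $c$ had positive degree in $y$, comparison of the coefficients of the two top powers of $y$ in the displayed relation would express $P$ as a polynomial whose degree is bounded independently of $\varphi$, so $\aut(D)$ would have bounded degree, a contradiction; hence $c\in\K[x]$ and $D=c(x)\partial_y$, which is case $(b)$. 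If instead $\ker D=\K$, then $D$ has infinitely many invariant algebraic curves, so by the theorem of Darboux and Jouanolou it admits a rational first integral $p\in\fra$; equivalently, $D$ stabilizes every member of the pencil defined by $p$. Since $\aut(D)$ then normalizes this pencil and has unbounded degree, \cite{BP2} shows that the general member of the pencil is a rational curve, which is case $(c)$.

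The argument is essentially bookkeeping once the inputs from \cite{BP2} are granted, and these are where I expect the real content to lie: that an automorphism subgroup of $\A^2$ of unbounded degree which preserves an irreducible plane curve forces that curve to be rectifiable, and that one preserving a fibration (or normalizing a pencil) forces the fibration to be conjugate to a trivial one, respectively the general member of the pencil to be rational. Everything else — the elementary facts (i)--(iii), the description $\ker D=\K[p]$, the coefficient computation giving $c\in\K[x]$, and the Darboux--Jouanolou theorem — is routine.
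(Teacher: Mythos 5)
Your proof follows the same route as the paper's: the identical case split on $\deig(D)$ and on $\ker D$, with the substantive inputs (rectifiability of the eigenvectors, the normal form $a\partial_x+b\partial_y$ with $x\mid a$, and the rational pencil in the trivial-kernel case) delegated to the theorems of \cite{BP2} exactly as the paper does. The extra details you supply (the finite permutation action on eigenvector ideals, the coefficient comparison forcing $c\in\K[x]$, the appeal to Darboux--Jouanolou) are correct and merely make explicit what the paper leaves to the citations.
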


\begin{proof}
  We assume $\deig(D)\neq 0$ and prove that one of the assertions $(a), (b)$ or $(c)$ holds.

  First suppose $0<\deig(D)<\infty$. By \cite[Thm. A]{BP2} all irreducible eigenvector is rectifiable. Then there is $\varphi\in\autpol$ such that $\varphi D \varphi^{-1}=a\partial_x+b\partial_y$ admits $x\in\pol$ as an eigenvector. Hence $x$ divides $a$. From \cite[Thm. B]{BP2} we deduce the assertion $(a)$ holds in this case.

  Next suppose $\deig(D)=\infty$. If $\ker D\neq\K$, the references already cited imply we are in the situation of assertion $(b)$. Analogously, if  $\ker D=\K$, then \cite[Thms. D]{BP2} implies we are as in assertion $(c)$ which completes the proof.  
  \end{proof}

  We consider the compactifications $X=\K^2\cup B$ of $\K^2$, where $B$ is the union of at most two curves isomorphic to $\P^1$: either  $X=\F_n, n\geq 1$, where $\F_n$ is the $n^{th}$ Nagata-Hirzebruch surface and $B$ is the union of a fiber and the $(-n)$-curve in that surface or  $X=\P^2$ is the projective plane, with  $B=L_\infty$ the line at infinity with respect to the affine chart $\K^2$. We denote by $\aut(X,B)$ the group of automorphisms of $X$ which leave $B$ invariant. 
  
  \begin{pro}\label{pro1}
    Let $D\in\derpol$ be a nonzero derivation and assume $\aut(D)$ to be a non-algebraic group.
If $\deig(D)\neq 0$, then $D$ is locally nilpotent.
\end{pro}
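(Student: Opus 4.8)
The plan is to handle the three nontrivial cases $(a)$, $(b)$, $(c)$ coming from Lemma~\ref{lem1} and show that in each of them $\aut(D)$ being non-algebraic forces $D$ to be locally nilpotent; case $(d)$ ($\deig(D)=0$) is excluded by hypothesis. In case $(b)$ there is nothing to do, since $D$ is already conjugate to $b(x)\partial_y$, which is locally nilpotent. So the real work is to rule out cases $(a)$ and $(c)$, i.e. to show that if $D$ is as in $(a)$ or $(c)$ then $\aut(D)$ is actually algebraic, contradicting the hypothesis. Equivalently, I want to bound $\deg\varphi$ uniformly over $\varphi\in\aut(D)$.

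First I would treat case $(a)$, where $D$ is conjugate to $a\partial_x+b\partial_y$ with either $a=0$, $b\in\K[x]$ (then $D=b(x)\partial_y$ is again locally nilpotent, contradiction with being in case $(a)$ rather than $(b)$ only if $b$ is nonconstant — so this subcase must actually be reexamined, but it is harmless since it is locally nilpotent), or $ab\neq 0$ with $x\mid a$. In the latter subcase $x=0$ is a $D$-stable line and there are only finitely many $D$-stable irreducible curves, all rectifiable. An automorphism $\varphi\in\aut(D)$ permutes the (finitely many) $D$-stable irreducible curves, hence a finite-index subgroup $G_0$ of $\aut(D)$ fixes each of them. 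I would then pass to a suitable completion $X\supset\K^2$ as in the paragraph before Proposition~\ref{pro1}: the curves $\{a=0\}$, the line $\{x=0\}$ and the curve(s) in the pencil/foliation picture give enough invariant structure that $G_0$ extends to a subgroup of $\aut(X,B)$ for an appropriate birational model $X$, and $\aut(X,B)$ is a linear algebraic group (it is a closed subgroup of the automorphism group of a rational surface fixing a curve). The degree of $\varphi$ as a plane automorphism is controlled by how $\varphi$ acts on $X$ together with the boundary data, so boundedness of $\aut(X,B)$ gives boundedness of degrees in $\aut(D)$, i.e. $\aut(D)$ is algebraic — contradiction. Hence case $(a)$ with $ab\neq0$ cannot occur under the hypothesis, and the only surviving possibility is $D$ locally nilpotent.

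Next, case $(c)$: $\deig(D)=\infty$, $\ker D=\K$, and $D$ stabilizes every member of a pencil of rational curves. Here I would argue that, since the general member $C_t$ of the pencil is a rational curve stabilized by $D$, after resolving the base points of the pencil and passing to a relatively minimal model we get a fibration $\pi\colon X\to\P^1$ whose general fiber is $\P^1$, so $X$ is a Hirzebruch surface (or $\P^1\times\P^1$); $D$ induces a vector field tangent to the fibers of $\pi$. Any $\varphi\in\aut(D)$ sends the pencil to itself (it permutes $D$-stable curves, and the pencil is intrinsically attached to $D$ because $\ker D=\K$ means $D$ has no nonconstant rational first integral of the ``generic'' type other than this one — this point needs care), so $\varphi$ descends to an automorphism of $\P^1$ on the base and acts fiberwise; again $\varphi$ lies in $\aut(X,B)$ for a fixed model and is therefore bounded in degree. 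So $\aut(D)$ would be algebraic, contradiction; this case cannot occur either. Combining: under $\deig(D)\neq0$ and $\aut(D)$ non-algebraic, only case $(b)$ survives, and there $D$ is locally nilpotent.

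The main obstacle, and the step I expect to require the most care, is the reduction to a \emph{fixed} completion $X$ on which all (or a finite-index subgroup of) $\aut(D)$ acts biregularly, with the plane degree controlled by the $X$-geometry: a priori different $\varphi$ could require different blow-ups, and one must show the invariant curves/pencil pin down a single model $X$ with $\aut(D)\subset\aut(X,B)$ up to finite index and bounded ``distance.'' This is exactly where the structural results of \cite{BP2} (Theorems A, B, D) are doing the heavy lifting — they identify the invariant curves as rectifiable and describe the normal forms — so I would lean on them to produce the model and then invoke that $\aut(X,B)$, being the automorphism group of a projective surface preserving a fixed anticanonical-type boundary, is a linear algebraic group, hence of bounded degree. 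Once degrees are bounded, $\aut(D)$ is closed (by \cite[Cor. 2.2]{BP2}) and of bounded degree, so algebraic, contradicting the hypothesis and leaving local nilpotence as the only option.
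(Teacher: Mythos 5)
There is a genuine gap, and it sits exactly where you yourself flag ``the step I expect to require the most care.'' Your strategy for cases $(a)$ and $(c)$ is to show that a finite-index subgroup of $\aut(D)$ acts biregularly on a \emph{fixed} completion $(X,B)$ and is therefore of bounded degree. But the invariant curves do not pin down such a model, and the claimed boundedness is false. In case $(a)$, the finite-index subgroup fixing each $D$-stable curve consists a priori of maps of the form $(\alpha x,\gamma y+P(x))$ with $P\in\K[x]$ arbitrary: the stabilizer of the line $x=0$ in $\autpol$ is an infinite-dimensional group, and no fixed $(X,B)$ receives it. The paper does not argue via a completion here at all. It takes the kernel $K$ of $\aut(D)\to{\rm Per}(E)$ ($E$ the finite set of $D$-stable prime principal ideals), observes that $K$ still has unbounded degree and that its elements have the de Jonqui\`eres form above, and then writes out the functional equations coming from $\varphi D=D\varphi$ for $D=x^\ell a\partial_x+b\partial_y$. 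Comparing degrees along a sequence $\varphi_n\in K$ with $\deg P_n\to\infty$ forces $a$ to be independent of $y$, forces $b=b_0(x)+b_1(x)y$, and finally yields the contradiction $\alpha=1$ versus $\alpha^s\neq 1$. That explicit computation is the actual content of the proof and is entirely absent from your sketch; asserting that the invariant structure ``gives enough'' to land in $\aut(X,B)$ is assuming the conclusion.

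Case $(c)$ is also wrong in substance. Preserving a pencil of rational curves does not bound the degree: $(x,y+P(x))$ preserves the pencil of vertical lines and has arbitrarily large degree; it extends to an automorphism of $\F_{\deg P}$ but not of any fixed $\F_n$. The dichotomy the paper imports from \cite{BP2} is between the $\F_n$ case --- which is \emph{excluded} precisely because $\aut(\F_n,B)$ is algebraic while $\aut(D)$ is not --- and the case of a pencil of lines through a point of $L_\infty$, which survives and keeps $\aut(D)$ inside the (non-algebraic) de Jonqui\`eres group. In that surviving case one gets $x-\beta\mid D(x-\beta)=a$ for general $\beta$, hence $a=0$ and $D=b(x,y)\partial_y$, and local nilpotence follows from \cite[Thm.~B]{BP2}. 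So case $(c)$ is not ``ruled out'' by boundedness; it is where the divisibility argument lives and where a positive conclusion is drawn. Your treatment of case $(b)$ is fine, but the two cases carrying the real work are not established by the proposal.
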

\begin{proof}
  By Lemma \ref{lem1} we know $D$ satisfies $(a)$, $(b)$ or $(c)$ therein.  Moreover, in case (b) the assertion is obvious. Let us consider the case (c), and denote $\Lambda$ a pencil of rational curves whose members are stable under $D$. From \cite[Pro. 2.10, Cor. 2.12]{BP2} we deduce that up to conjugation $\aut(D)$ may be thought of as either a subgroup of $\aut(\F_n,B)$, for some $n$, or one of $\aut(\P^2,L_\infty)$, where in the second case $\Lambda$ turns out to be a pencil composed by lines passing through a point $p\in L_\infty$. Without loss of generality we assume $\K^2\subset\P^2$ via the embedding $(x,y)\mapsto (1:x:y)$, with $L_\infty=(x_0=0)$ and $p=(0:0:1)$.

Since $\aut(\F_n,B)$ is an algebraic group then $\aut(D)$ is necessarily as in the second case, hence $\aut(D)$ is contained in the so-called \emph{de Jonqui\`eres Group}
  \[\jon:= \{(\alpha x+\beta,\gamma y+P(x); \alpha,\beta\in\K^*,\gamma\in\K, P\in\K[x]\}.\]
  Moreover, since a general member of  $\Lambda$ corresponds in $\K^2$ to a line of equation $x-\beta=0$, for a general $\beta\in\K$, we conclude that if $D=a\partial_x+b\partial_y$, then  $x-\beta$ divides $D(x-\beta)=a$, for any $\beta$. Hence $a=0$ and $D=b(x,y)\partial_y$. Thus the assertion is consequence of \cite[Thm. B]{BP2}.

  Now, assume we are as in the assertion (a) of Lemma \ref{lem1}. Up to conjugation we may assume $D=x^\ell a(x,y)\partial_x+b(x,y)\partial_y$, where we may suppose $a\neq 0$ and $x$ does not divide $a$ because $a=0$ and $b\in\K[x]$ leads to the required conclusion.      

We have a homomorphism $\aut(D)\to {\rm Per(E)}$, where $E$ is the set of prime principal ideals of $\pol$ which are $D$-stable and ${\rm Per(E)}$ denotes the finite group of permutations of $E$. Hence the principal ideal $x\pol$ belongs to $E$. Since the degree function $\deg:\autpol\to\N$ is not bounded on $\aut(D)$ we deduce it is not bounded on the kernel $K$ of that homomorphism, so $K$ is not an algebraic group. Note that an element $\varphi=(f,g)\in K$ verifies $\varphi(x)/x\in\K^*$, hence $f=\alpha x$ for some $\alpha\in\K^*$. Since the jacobian of $\varphi$ is constant we deduce $g=\gamma y+P(x)$ for some $\gamma\in\K^*$ and $P\in\K[x]$. In other words $K$ is contained in the subgroup of $\jon$ whose elements fix the ideal generated by $x$. More explicitly, if  $\varphi=(\alpha x,\gamma y+P(x))\in K$, then
  \begin{equation}\label{eq1}
    \left\{\begin{array}{l}
      \alpha^{\ell-1}a(\alpha x,\gamma y+P(x))=a(x,y)\\
          b(\alpha x,\gamma y+P(x))=\gamma b(x,y)+x^\ell a(x,y)P'(x).
      \end{array}\right.\end{equation}

  On the other hand, since $K$ is not an algebraic group we deduce it contains a sequence $(\varphi_n)_{n\geq 1}$ of elements such that the corresponding sequence of degrees $(\deg\varphi_n)_{n\geq 1}$ is increasing.  We will show in several steps this implies $a=0$ which yields a contradiction and terminates the proof. Write $\varphi_n=(\alpha_nx,\gamma_n y+P_n(x))$, $n\geq 1$.

  First we observe that $a$ does not depend on $y$. Indeed, write $a=\sum_{i=0}^d a_i(x)y^i$, $d\geq 0$. Since $\deg P_n$ increases with $n$, if $d>0$ we see that the top equality in (\ref{eq1}) implies $\deg P_n(x)^d$ is bounded,  hence $d=0$.

  Second, by an analogous reasoning the bottom equality in (\ref{eq1}) gives $b=b_0(x)+b_1(x)y$, where $b_1\neq 0$, and then that equality is equivalent to the following two ones 
  \begin{eqnarray}
    b_0(\alpha x)+b_1(\alpha x)P(x)&=&\gamma b_0(x)+x^\ell a(x)P'(x),\label{eq2}\\
    b_1(\alpha x)&=&b_1(x).\label{eq3}
  \end{eqnarray}
  
  Now write
  \[a=\sum_{i=0}^rA_i x^i,\ b_1=\sum_{i=0}^sB_i x^i,\]
  where $A_rB_s\neq 0$. If $P=P_n=\sum_{i=0}^mp_i x^i$ for some $n\gg 0$, we deduce
 \[m=m_n=\alpha_n^sB_sA_r^{-1}.\]
  Hence we may suppose $\alpha_n^s\neq 1$ because $m_n$ increases with $n$, and so $s=\deg b_1>0$. From (\ref{eq3}) we deduce $\alpha=1$, a contradiction which finishes the proof.    
\end{proof}

Now we treat the case $(d)$ of Lemma \ref{lem1}. We have the following result valid over an arbitrary field of characteristic zero which together with Proposition \ref{pro1} readily leads Theorem \ref{thm1}.

\begin{pro}
Let $D$ be a derivation such that $\deig(D)=0$. Then $\aut(D)$ is finite.
 \end{pro}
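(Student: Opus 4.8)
The plan is to show that an infinite $\aut(D)$ would make $D$ either locally nilpotent or semisimple, both of which are excluded by $\deig(D)=0$; the bridge is that the centraliser of $D$ inside $\derpol$ is exactly the line $\K D$. We may assume $\K$ algebraically closed: $\deig(D)$ is unchanged by the extension $\K\subset\overline{\K}$ (an irreducible $D$-stable ideal over $\K$ breaks into eigenvectors over $\overline{\K}$, and the norm of a reduced $D$-stable ideal over $\overline{\K}$ is such an ideal over $\K$), while $\aut_\K(D)$ embeds into $\aut_{\overline{\K}}(D)$. Writing $D=a\partial_x+b\partial_y$, the hypothesis $\deig(D)=0$ yields at once $\gcd(a,b)\in\K^*$ (otherwise $(\gcd(a,b))$ is a reduced $D$-stable ideal of height $1$), $\ker D=\K$, the absence of a non-constant rational first integral of $D$ (the numerator of such a function would be an eigenvector up to a unit), and therefore $\deg D\ge 2$, since by a direct check every nonzero derivation of $\pol$ of degree $\le 1$ has $\deig>0$.

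The core of the argument is the following \emph{centraliser lemma}: if $E\in\derpol$ and $[D,E]=0$, then $E\in\K D$. Put $W=\det(D,E)\in\pol$, so that $\iota_D\iota_E(dx\wedge dy)=-W$; a standard computation with Cartan's formulas, using $\mathcal L_D(dx\wedge dy)=(\div D)\,dx\wedge dy$ and $\iota_{[D,E]}=0$, gives $D(W)=(\div D)\,W$. If $W\ne 0$ were a non-unit, an irreducible factor $W_1$ of $W$ would satisfy $W_1\mid D(W_1)$, by comparing the order of the pole of $D(W)/W=\div D\in\pol$ along $W_1=0$, contradicting $\deig(D)=0$; hence $W\in\K^*$, and then $D(W)=0$ forces $\div D=0$, and symmetrically $\div E=0$. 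Normalising $W=1$, the $1$-form $\beta=a\,dy-b\,dx$ satisfies $\beta(D)=0$, $\beta(E)=1$, and $d\beta=(\div D)\,dx\wedge dy=0$; by the algebraic Poincaré lemma on $\A^2$, $\beta=dv$ with $v\in\pol$, so $D(v)=\beta(D)=0$, whence $v\in\ker D=\K$ and $\beta=0$, a contradiction. Therefore $W=0$, i.e. $E=rD$ with $r\in\fra$; then $0=[D,rD]=D(r)\,D$ gives $D(r)=0$, and $r\in\K$ because $D$ has no non-constant rational first integral.

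Now suppose $\aut(D)$ is infinite. The main step — and the place where the results of \cite{BP2} and the compactifications $X\in\{\F_n,\P^2\}$ of the preceding section are needed — is to prove that $\deg$ is bounded on $\aut(D)$, equivalently (as $\aut(D)$ is closed in $\autpol$ by \cite[Cor.~2.2]{BP2}) that $\aut(D)$ is an algebraic subgroup of $\autpol$: the point is that an unbounded family of symmetries of $D$ would, through its behaviour on such a compactification, force a $D$-stable algebraic curve, against $\deig(D)=0$. Granting this, $\aut(D)$ is infinite and algebraic, hence of positive dimension, so it contains a one-parameter subgroup $\{\psi_t\}$, a copy of $\G_a$ or of $\G_m$. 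Differentiating $\psi_t D\psi_t^{-1}=D$ at $t=0$ produces a nonzero $E\in\derpol$ with $[D,E]=0$, so $E=rD$ with $r\in\K^*$ by the centraliser lemma. If $\{\psi_t\}\cong\G_a$ then $\psi_t=\exp(tE)$, so $D$ is locally nilpotent and, by Rentschler's theorem \cite{Re}, conjugate to $u(x)\partial_y$, for which $\deig=\infty$. If $\{\psi_t\}\cong\G_m$ then $E$, hence $D$, is semisimple, so $D$ is conjugate to $px\partial_x+qy\partial_y$ with $(p,q)\ne(0,0)$, which stabilises a coordinate axis. Either way $\deig(D)>0$, a contradiction, so $\aut(D)$ is finite.

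The genuine obstacle is the bounded-degree step. Its elementary face is the relation $af_x+bf_y=a\circ(f,g)$ for $(f,g)\in\aut(D)$ (and the analogous one for $b$): the left-hand side has degree at most $\deg(f,g)+\deg D-1$, whereas, writing $\overline{\phantom{a}}$ for top-degree forms, the right-hand side has degree $(\deg a)\cdot\deg(f,g)$ unless $\overline a(\overline f,\overline g)=0$; since $\deg D\ge 2$ this already bounds $\deg(f,g)$ off the degenerate locus where these leading forms align, and excluding an infinite family of symmetries confined to that locus is exactly what forces a $D$-invariant curve — which is the role played by \cite{BP2} and the geometry at infinity.
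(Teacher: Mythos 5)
Your argument rests on two pillars: the centraliser computation and the claim that $\deg$ is bounded on $\aut(D)$. The first pillar is sound — the identity $D(W)=(\div D)W$ for $W=\det(D,E)$, the exclusion of a non-unit $W$ via $\deig(D)=0$, and the Poincar\'e-lemma argument killing the case $W\in\K^*$ all check out. The second pillar is never established: you introduce it with ``Granting this,'' and your closing paragraph only records the easy estimate $\deg(af_x+bf_y)\le\deg\varphi+\deg D-1$ against $\deg a\cdot\deg\varphi$, conceding that the whole difficulty sits in the ``degenerate locus'' $\overline a(\overline f,\overline g)=0$. That locus is not at all exceptional — for $\deg\varphi>1$ the leading forms $\overline f,\overline g$ of a polynomial automorphism are constants times powers of a single linear form, so the cancellation you need to exclude is the generic behaviour — and moreover $\deg D\ge 2$ only gives $\max(\deg a,\deg b)\ge 2$, so one of your two relations may be vacuous. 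Since Theorem \ref{thm1} is precisely the assertion that non-local-nilpotence forces bounded degree, and this proposition is one of the two cases into which its proof is split, the step you grant is not a technical detail: it is the entire content of the statement in the case $\deig(D)=0$. As written, the proof is incomplete.

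For comparison, the paper disposes of this case in a few lines by an entirely different route: over $\C$, a derivation with $\deig(D)=0$ has finite isotropy by \cite[Thm.~A]{CMP} (a result on the birational geometry of the foliation attached to a simple derivation), and the general characteristic-zero case reduces to $\C$ by adjoining to $\Q$ the finitely many coefficients of $D$ and of an arbitrary finite subset of $\aut(D)$ and embedding the resulting field into $\C$. Your centraliser lemma together with the $\G_a/\G_m$ dichotomy would indeed finish the argument once bounded degree is known, but that input has to come from somewhere; in this paper it comes from \cite{CMP}, not from an elementary estimate at infinity. If you want a self-contained proof along your lines, the missing boundedness statement is where all the work lies.
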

 \begin{proof}
   If $\K=\C$ is the field of complex numbers, the result is a straightforward consequence of \cite[Thm. A]{CMP}. Denote by $\ell$ the order of $\aut(D)$ in that case. It suffices to prove that if $F=\{\varphi_1,\ldots,\varphi_n\}$ is a subset of $\aut(D)$ in the general case, then $n\leq \ell$.

 In fact, inspired by the proof of \cite[Prop. 1.4]{DK} we consider the extension $\K_0$ of $\Q$ obtained by adjoining the coefficients of $D(x), D(y)$, $\varphi_i(x)$, $\varphi_i(y)$, $i=\ldots,n$. Then $D$ and all of the $\varphi_i'$s restraint to give a derivation and suitable automorphisms $D_0, \varphi_{i0}:\K_0[x,y]\to\K_0[x,y]$, $i=1\ldots,n$, such that $\varphi_{i0}D=D\varphi_{i0}$ for all $i$. Since $\K_0$ is isomorphic to a subfield of $\C$ we may suppose $\K_0\subset \C$, and then all these maps extend to $\C[x,y]$ from which the assertion follows.
\end{proof}

\begin{exa}\label{exa1}
  Theorem \ref{thm1} doesn't hold true for $n>2$. Indeed, let $D=\sum_{i=1}^{n}a_i\partial_{x_i}$ be a derivation of $B=\K[x_1,\ldots,x_n]$, where $a_1,\ldots,a_{n-1}$ don't depend on $x_1, x_n$ and $a_n=0$. Then $D$ induces a derivation in $A=\K[x_2,\ldots,x_{n-1}]$. If $a_2,\ldots,a_{n-1}$ are general enough to ensure $D$ is not locally nilpotent as derivation in $A$, then it is so also as derivation in $B$. However, $\aut(D)$ contains the automorphisms of the form
  \[(x_1+p(x_{n}),\ldots,x_{n-1},x_n), \ p\in \K[x_n],\]
  hence it contains elements defined by polynomials of arbitrary degree.
\end{exa}

% \begin{rem}
% As pointed out in \cite[Rem. 3]{BV} there is a monomorphism $\ker D\to\aut(D)$ given by the so-called exponential map $D$
% \end{rem}

\end{document}